\newtheorem{theorem}{Theorem}[section]
\newtheorem{proposition}{Proposition}[section]
\newtheorem{corollary}{Corollary}[section]
\newtheorem{remark}{Remark}[section]
\newcommand{\field}[1]{\mathbb{#1}}
\newcommand{\real}{\field{R}}
\begin{document}

\title[The fundamental tone of the $p$-Laplacian on  manifolds]
{On the fundamental tone of the $p$-Laplacian on Riemannian manifolds and applications}

\author[Carvalho]{Francisco G. de S. Carvalho}
\address{ Universidade Federal do Piau\'i,
Picos, PI, Brazil}  \email{franciscogsc.mat@ufpi.edu.br}
\author[Cavalcante]{Marcos P. de A. Cavalcante} 
\address{Universidade Federal de Alagoas,
Macei\'o, AL, Brazil} 
\email{marcos@pos.mat.ufal.br}

\subjclass[2010]{35P30, 47J10, 58C40}
\date{\today}

\keywords{Nonlinear Eigenvalues problems; $p$-Laplacian operator on Riemannian manifolds.}

\begin{abstract}

We present a general lower bound for the fundamental tone for the $p$-Laplacian on 
Riemannian manifolds carrying a special kind of function. 
We then apply our result to the cases of 
negatively curved simply connected manifolds,  a class of warped product manifolds and for a class of 
Riemannian submersions.
\end{abstract}

\date{\today}

\maketitle

\section{Introduction}\label{intro}

%
%

Let $\Omega$ be a bounded domain in a $n$-dimensional Riemannian manifold $M$, $n\geq 2$. 
The first eigenvalue of the Dirichlet problem for the  Laplace-Beltrami operator on $\Omega$ can be characterized variationally as
\[
\lambda_ 1(\Omega) = \inf \bigg\{\frac{\int_\Omega \|\nabla u\|^2 \, dM}{\int_\Omega u^2\, dM}:
u\in W^{1,2}_0(\Omega), u\neq 0 \bigg \},
\]
where $W^{1,2}_0(\Omega)$ denotes the Sobolev space with trace in $\partial \Omega$.
If $M$ is not compact,  \emph{the first eigenvalue} of $M$ is defined as the limit
\[
\lambda_1(M)=\lim_{k\to \infty}\lambda_1(\Omega_k),
\]
if it exists, where $\Omega_1\subset \Omega_2\subset\cdots$ is an exhaustion of $M$. 
It is easy to  see that this limit does not
depend on the exhaustion we choose. 
This number is a very important geometrical invariant, and is 
also called \emph{the fundamental tone of $M$} by some authors.
A classical problem in spectral geometry is to find conditions on $M$ which imply
$\lambda_1(M)>0$ (see \cite[Chap. III]{SY}). For instance, if $M$ has
infinite volume, the positivity of  $\lambda_1(M)$ implies  the manifold  $M$ is hyperbolic 
(or non-parabolic) (see \cite[Proposition 10.1]{G}), that is,  $M$ admits  a positive Green function.

In this sense, we recall the classical paper of  McKean \cite{McKean}, which proved that if
$M$ is a simply connected Riemannian manifold with sectional curvature $K_M\leq -\kappa^2<0$, then
\[
\lambda_1(M) \geq \frac{(n-1)^2\kappa^2}{4}.
\] 
Notice that this lower bound is precisely the first eigenvalue of $\mathbb H^n(-\kappa^2)$, 
the hyperbolic space of constant  negative curvature $-\kappa^2$. 
This result of McKean was then extended and used by many authors.
In particular, Veeravalli \cite{Veeravalli}, using very simple ideas, proved a positive lower bound estimate for
$\lambda_1(M)$ for a large class of Riemannian manifolds given by warped products.

In this paper, we generalize the work of Veeravalli for the $p$-Laplacian operator on Riemannian manifolds,
and we present some applications.
Recall that the $p$-Laplacian $\Delta_p$, $1<p<\infty$, is the nonlinear operator defined as
\[
\Delta_p u = \textrm{div}\, (\|\nabla u\|^{p-2}\nabla u)
\]
for smooth functions on $M$. The $p$-Laplacian shares many  properties with the Laplace-Beltrami 
operator ($p=2)$, and it appears naturally in many  problems in Physics and Applied Mathematics
(see for instance \cite{AP, MR1312235,MR2164332} and the references cited therein). 
Although it is a nonlinear operaror if $p\neq 2$, its first eigenvalue is also characterized variationally  in a similar way as
\[
\lambda_ {1,p}(\Omega) = \inf \bigg\{\frac{\int_\Omega \|\nabla u\|^p \, dM}{\int_\Omega u^p\, dM}:
u\in W^{1,p}_0(\Omega), u\neq 0 \bigg \}.
\]

As before, if $M$ is not compact, we define the fundamental tone for the $p$-Laplacian
on $M$, $\lambda_{1,p}(M)$,  as the limit of the first eigenvalue for some exhaustion by compacts.
Our first result is a criteria to obtain a positive lower bound for $\lambda_ {1,p}$ 
on bounded domains carrying 
a special kind of function. More precisely, we prove:
\begin{theorem}\label{teo1}
Let $\Omega$ be a bounded domain on a Riemannian manifold $M$, 
and assume that there exist a smooth function
$f:\Omega\to \real$ satisfying that $\|\nabla f\|\leq a$ and $\Delta_p f\geq b$ for some constants $a,b>0$. 
Then the first eigenvalue
of the $p$-Laplacian satisfies 
\[
\lambda_{1,p}(\Omega)\geq \frac{b^p}{p^pa^{p(p-1)}}.
\]
\end{theorem}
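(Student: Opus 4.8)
The plan is to estimate the Rayleigh quotient appearing in the variational characterization of $\lambda_{1,p}(\Omega)$ from below by the stated constant, uniformly over all admissible test functions. Since $C_c^\infty(\Omega)$ is dense in $W^{1,p}_0(\Omega)$ and $\|\nabla |u|\| = \|\nabla u\|$ almost everywhere, it suffices to prove
\[
\int_\Omega \|\nabla u\|^p \, dM \geq \frac{b^p}{p^p a^{p(p-1)}} \int_\Omega |u|^p \, dM
\]
for every $u \in C_c^\infty(\Omega)$, and then pass to the infimum.

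The key idea is to test the differential inequality $\Delta_p f \ge b$ against the weight $|u|^p$. Multiplying and integrating gives $b \int_\Omega |u|^p \, dM \le \int_\Omega |u|^p \,\Delta_p f \, dM$. I would then integrate by parts: since $u$ has compact support there is no boundary term, and writing $\Delta_p f = \operatorname{div}(\|\nabla f\|^{p-2}\nabla f)$ together with $\nabla(|u|^p) = p\,|u|^{p-2}u\,\nabla u$ yields
\[
b \int_\Omega |u|^p \, dM \le -p \int_\Omega |u|^{p-2} u\, \|\nabla f\|^{p-2} \langle \nabla u, \nabla f\rangle \, dM.
\]
Taking absolute values, applying the Cauchy--Schwarz inequality $|\langle \nabla u, \nabla f\rangle| \le \|\nabla u\|\,\|\nabla f\|$, and invoking the hypothesis $\|\nabla f\| \le a$, I obtain
\[
b \int_\Omega |u|^p \, dM \le p\, a^{p-1} \int_\Omega |u|^{p-1} \|\nabla u\| \, dM.
\]

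The final step is to apply H\"older's inequality to the right-hand side with the conjugate exponents $\tfrac{p}{p-1}$ and $p$, which produces
\[
\int_\Omega |u|^{p-1}\|\nabla u\|\,dM \le \Big(\int_\Omega |u|^p\,dM\Big)^{\frac{p-1}{p}} \Big(\int_\Omega \|\nabla u\|^p\,dM\Big)^{\frac1p}.
\]
Substituting, dividing by $\big(\int_\Omega |u|^p\,dM\big)^{(p-1)/p}$ (harmless when this quantity is nonzero, the zero case being excluded), and raising to the $p$-th power gives exactly $b^p \int_\Omega |u|^p\,dM \le p^p a^{p(p-1)} \int_\Omega \|\nabla u\|^p\,dM$; taking the infimum over $u$ then completes the proof.

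I expect the main technical point --- rather than a genuine obstacle --- to be justifying the integration by parts, which hinges on $|u|^p$ being $C^1$ with continuous gradient $\nabla(|u|^p)=p\,|u|^{p-2}u\,\nabla u$; this holds precisely because $p>1$ forces this derivative to vanish on the zero set of $u$. The only other thing to watch is that each estimate (discarding the sign, Cauchy--Schwarz, and H\"older) preserves the direction of the inequality, so that the chain collapses to the constant $b^p/(p^p a^{p(p-1)})$ without any loss.
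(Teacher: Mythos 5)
Your proposal is correct and follows essentially the same route as the paper: test $\Delta_p f \ge b$ against $|u|^p$, integrate by parts (no boundary term for compactly supported $u$), apply Cauchy--Schwarz together with $\|\nabla f\|\le a$, and then absorb the mixed term $\int_\Omega |u|^{p-1}\|\nabla u\|$. The only (cosmetic) difference is the last step, where you use H\"older's inequality on the integrals while the paper uses a pointwise Young inequality with a free parameter $\theta$ chosen afterwards; both yield exactly the constant $b^p/(p^p a^{p(p-1)})$, and your version avoids the parameter optimization.
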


Similarly to the linear case ($p=2$), the positivity of the fundamental tone of $\Delta_p$
on a complete manifold with infinite volume implies the $p$-hiperbolicity, that is, the existence of
a positive Green function for the $p$-Laplacian on $M$. See for instance the work of the Batista, 
Santos, and the second named author in \cite{BCS}. 

As a first application of our general estimate we obtain in a simple way the 
following generalization of McKean's theorem: 
\begin{corollary}\label{c1}
Let $M^n$ be a complete simply connected Riemannian manifold  such that the sectional curvature satisfies 
$K_M\leq -\kappa^2$. If $\Omega\subset M$ is a bounded domain, then
\[
\lambda_{1,p}(\Omega) \geq \frac{(n-1)^p\kappa^p}{p^p}\coth^pR,
\]
where $R>0$ is such that $\Omega$ is contained in the geodesic ball of radius $R$.

In particular, $\lambda_{1,p}(M)\geq  \frac{(n-1)^p\kappa^p}{p^p} =\lambda_{1,p}(\mathbb H^n(-\kappa^2)) .$
\end{corollary}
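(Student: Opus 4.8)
The plan is to apply Theorem~\ref{teo1} by constructing, on the geodesic ball, a suitable function $f$ whose gradient norm is bounded above and whose $p$-Laplacian is bounded below, with the two constants $a,b$ chosen so that the resulting ratio $b^p/(p^p a^{p(p-1)})$ collapses to the claimed bound. Since $M$ is complete, simply connected and satisfies $K_M\le -\kappa^2$, by Cartan--Hadamard it is diffeomorphic to $\real^n$ through the exponential map and the distance function $r(x)=\mathrm{dist}(x,o)$ from a fixed point $o$ is smooth away from $o$ with $\|\nabla r\|\equiv 1$. The natural candidate is a radial function $f=\varphi\circ r$, for which $\|\nabla f\|=|\varphi'(r)|$ and
\[
\Delta_p f=\|\nabla f\|^{p-2}\Big((p-1)\varphi''+ \varphi'\,\Delta r\Big),
\]
using that $\Delta_p(\varphi\circ r)=\mathrm{div}(|\varphi'|^{p-2}\varphi'\nabla r)$ together with $\|\nabla r\|=1$.

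The key geometric input is the Hessian comparison theorem: under $K_M\le-\kappa^2$ one has the lower bound
\[
\Delta r \ge (n-1)\kappa\coth(\kappa r)
\]
on $M\setminus\{o\}$. I would choose $f=r$ itself, so that $\varphi(t)=t$, $\varphi'=1$, $\varphi''=0$, giving $\|\nabla f\|=1=:a$ and
\[
\Delta_p r=\Delta r\ge (n-1)\kappa\coth(\kappa r).
\]
Since $\coth$ is decreasing on $(0,\infty)$ and $\Omega$ is contained in the geodesic ball $B_R(o)$, on $\Omega$ we have $r\le R$ and hence $\coth(\kappa r)\ge\coth(\kappa R)$, which yields the uniform lower bound $\Delta_p r\ge (n-1)\kappa\coth(\kappa R)=:b$. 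Feeding $a=1$ and $b=(n-1)\kappa\coth(\kappa R)$ into Theorem~\ref{teo1} gives
\[
\lambda_{1,p}(\Omega)\ge\frac{b^p}{p^p a^{p(p-1)}}=\frac{(n-1)^p\kappa^p}{p^p}\coth^p(\kappa R),
\]
which is exactly the asserted estimate (up to the cosmetic point that the statement writes $\coth^p R$; I would read the argument as $\kappa R$, matching the $\mathbb H^n$ normalization).

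For the final assertion about $\lambda_{1,p}(M)$, I would take an exhaustion of $M$ by geodesic balls $B_k=B_k(o)$ with $R=k\to\infty$. Applying the domain estimate to each $\Omega_k=B_k$ gives $\lambda_{1,p}(B_k)\ge (n-1)^p\kappa^p p^{-p}\coth^p(\kappa k)$, and passing to the limit uses $\coth(\kappa k)\to 1$ to conclude
\[
\lambda_{1,p}(M)=\lim_{k\to\infty}\lambda_{1,p}(B_k)\ge\frac{(n-1)^p\kappa^p}{p^p},
\]
while the value on the model space $\mathbb H^n(-\kappa^2)$ is known to equal this quantity, so the bound is sharp.

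The main obstacle, and the only genuinely delicate point, is the non-smoothness of $r$ at the center $o$, where the distance function fails to be differentiable; strictly speaking $f=r\in W^{1,p}$ but is not smooth, so Theorem~\ref{teo1} as stated (which requires a smooth $f$) does not apply verbatim. I expect to handle this by either regularizing near $o$ (replacing $r$ by a smooth function agreeing with $r$ outside a small ball and preserving the inequalities up to a controllable error that vanishes as the regularization radius shrinks), or by applying the theorem on the annular domain $\Omega\setminus \overline{B_\epsilon(o)}$ and letting $\epsilon\to0$, exploiting that a single point has $p$-capacity zero and hence does not affect the infimum defining $\lambda_{1,p}$. The curvature comparison and the elementary monotonicity of $\coth$ are standard, so beyond this regularity bookkeeping the argument is a direct substitution into Theorem~\ref{teo1}.
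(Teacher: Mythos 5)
Your argument follows the same template as the paper---exhibit a function with $\|\nabla f\|\le a$ and $\Delta_p f\ge b$ and feed the constants into Theorem~\ref{teo1}---but with a genuinely different choice of $f$: you take the distance function $r$ from a center and invoke the Laplacian comparison theorem, whereas the paper takes the Busemann function $B$ of a geodesic ray, which on a Cartan--Hadamard manifold is $C^2$ everywhere, satisfies $\|\nabla B\|=1$, and (by the lemma the paper cites) $\Delta_p B=\Delta B\ge (n-1)\kappa\coth R$ on any geodesic ball of radius $R$; Theorem~\ref{teo1} then applies verbatim with no regularity bookkeeping. That is exactly what the Busemann choice buys and what your choice lacks, since $r$ fails to be smooth at the pole. (Incidentally, your computation yields $\coth(\kappa R)$, which is the natural normalization; the $\coth R$ in the statement follows the paper's cited estimate, and the discrepancy is immaterial for the final assertion about $\lambda_{1,p}(M)$ since both factors tend to $1$.)

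The one genuine problem is your proposed repair of the singularity at the center $o$. The claim that a single point has $p$-capacity zero is true only for $p\le n$; for $p>n$ a point has positive $p$-capacity, so $\lambda_{1,p}\bigl(\Omega\setminus\overline{B_\epsilon(o)}\bigr)$ need not converge to $\lambda_{1,p}(\Omega)$ as $\epsilon\to 0$, and since shrinking the domain only increases the first eigenvalue, a lower bound on the annulus does not descend to $\Omega$ without that convergence. To close the gap you must either carry out the smoothing of $r$ near $o$ and check that the $p$-Laplacian of the regularized function stays above $b$ up to an error that vanishes with the regularization (plausible, because the distributional singularity of $\Delta r$ at the pole is nonnegative, but not automatic for $p\ne 2$ and not verified in your sketch), or observe that the proof of Theorem~\ref{teo1} only uses the weak inequality obtained by pairing $\Delta_p f$ with the test function $|v|^p$, an inequality that the distance function does satisfy in the distributional sense across the pole. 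Either of these completes your route; the capacity argument as stated does not, and the paper's Busemann-function proof sidesteps the issue entirely.
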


We point out the same result was obtained by Poliquin in \cite{Poliquin} using  estimates by the Cheeger constant. In fact, in \cite[Proposition 4.5]{Poliquin} it is proved that 
$\lambda_{1,p}(\Omega)\geq  \frac{(n-1)^p\kappa^p}{p^p}$ for all domain $\Omega \subset M$.

\begin{remark}
To compute the exact value of $\lambda_p(\Omega)$ for compact domains on Riemannian manifolds 
is a hard problem. 
For instance, for $p=2$ and geodesic balls, it was computed by Savo  \cite{MR2480663} 
in the 3-dimensional hyperbolic space  $\mathbb H^3$, 
and by Kristaly \cite{Kristaly} for odd-dimensional hyperbolic spaces.
On the other hand, this kind of estimate  as in Corollary \ref{c1} is particularly useful in order to prove a version of 
Berestycki-Caffarelli-Nirenberg conjecture for 
domains in the hyperbolic plane (see \cite{CC,EM}).
\end{remark}

Next,  inspired in the work of Veeravally \cite{Veeravalli} we present an estimate for a class of warped
product metrics. 

\begin{corollary}\label{cor}
Let $(N^{n-1}, g_0)$ be a  Riemannian manifold and consider $M^n = \real\times N$
endowed with the warped metric $ds^2 = dt^2 + e^{2\rho(t)}g_0$, 
where the warped function  satisfies $\rho'(t)\geq \kappa>0$, for some constant $\kappa$. 
Then, the fundamental tone  of  the $p$-Laplacian of $M$ is bounded from below by 
\[
\lambda_{1,p}(M) \geq \frac{(n-1)^p}{p^p}\kappa^p.
\]
\end{corollary}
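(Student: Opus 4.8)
The plan is to apply Theorem \ref{teo1} to the bounded pieces of an exhaustion of $M$, using as test function the height function $f(t,x)=t$ induced by the projection onto the $\real$-factor. The reason this should work cleanly is that the warped structure $ds^2 = dt^2 + e^{2\rho(t)}g_0$ makes $\partial_t$ a globally defined unit field orthogonal to the fibers $\{t\}\times N$; consequently $f$ has \emph{constant} gradient norm, which is exactly what turns the nonlinear operator $\Delta_p$ into something computable.

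First I would check the gradient bound. Since $df = dt$ and $g(\partial_t,\partial_t)=1$ with $\partial_t$ orthogonal to the $N$-directions, one has $\nabla f = \partial_t$ and hence $\|\nabla f\| \equiv 1$, so the constant $a=1$ works. The crucial consequence is that $\|\nabla f\|^{p-2}\equiv 1$, so that $\Delta_p f = \operatorname{div}(\|\nabla f\|^{p-2}\nabla f) = \operatorname{div}(\nabla f) = \Delta f$, and the problem reduces to the linear Laplacian. Next I would compute $\Delta f$ from the warped volume element $dV = e^{(n-1)\rho(t)}\,dt\,dV_{g_0}$: for a function of $t$ alone the Laplacian reads $\Delta u = e^{-(n-1)\rho}\,\partial_t\big(e^{(n-1)\rho}\,u'(t)\big)$, which for $u(t)=t$ gives $\Delta f = (n-1)\rho'(t)$. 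The hypothesis $\rho'(t)\geq \kappa$ then yields $\Delta_p f = \Delta f \geq (n-1)\kappa$, so the constant $b=(n-1)\kappa$ works.

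With $a=1$ and $b=(n-1)\kappa$, Theorem \ref{teo1} gives, for every bounded domain $\Omega\subset M$,
\[
\lambda_{1,p}(\Omega)\geq \frac{b^p}{p^p a^{p(p-1)}} = \frac{(n-1)^p\kappa^p}{p^p},
\]
and letting $\Omega$ run through an exhaustion of $M$ produces the stated bound for $\lambda_{1,p}(M)$. I do not expect a serious obstacle here: since $\nabla f = \partial_t$ never vanishes, $\|\nabla f\|^{p-2}$ is smooth and the reduction to the linear case is fully legitimate, and the divergence computation with the warped volume form is routine. The only point requiring a little care is to confirm that $f=t$, although unbounded on $M$, still satisfies both hypotheses of Theorem \ref{teo1} \emph{on each} $\Omega$ — which it does, the bounds $\|\nabla f\|\leq 1$ and $\Delta_p f\geq (n-1)\kappa$ being uniform and independent of $\Omega$.
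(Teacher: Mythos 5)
Your proposal is correct and follows essentially the same route as the paper: the authors also take the projection $F(s,x)=s$ (which they identify as a Busemann function of the $\real$-factor), observe $\|\nabla F\|=1$ so that $\Delta_p F=\Delta F=(n-1)\rho'(t)\geq (n-1)\kappa$, and conclude via Theorem \ref{teo1}. Your extra details on the warped volume-form computation and the exhaustion step are fine and consistent with the paper's argument.
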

This class of warped manifolds contains the hyperbolic space, and so the estimate is sharp.
On the other hand, 
there are also some examples of Riemannian manifolds in this class whose sectional curvature
is positive  in some directions, as we can
see if we choose, for instance, $N=\mathbb S^{n-1}$, the round sphere, and $\rho (t)=\kappa t$.

As observed   in \cite{CM},  this kind of estimate
presented in Corollary \ref{cor} can be \emph{lifted} 
for Riemannian manifolds that admit a Riemannian submersion over the hyperbolic space 
whose fibers have uniformly  bounded mean curvature. 
In this context, we have the following theorem for the $p$-Laplacian

\begin{theorem}\label{sub} Let $\widetilde M^m$ be  a complete Riemannian manifold that admits 
a Riemannian submersion over $M^n$, where $M^n$ is given in Corollary \ref{cor}. If the mean
curvature of the fibers satisfy $\|H^\mathcal F\|\leq \alpha$ for some $\alpha < (n-1)\kappa^{1/p}$
then 
\[
\lambda_{1,p}(M) \geq \frac{\big((n-1)^p\kappa - \alpha\big)^p}{p^p}.
\]
\end{theorem}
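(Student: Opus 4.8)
The plan is to deduce Theorem \ref{sub} from Theorem \ref{teo1} by transplanting to the total space $\widetilde M$ the very test function used in Corollary \ref{cor}. Recall that on the base $M=\real\times N$ that estimate comes from $f(t)=t$, for which $\nabla f=\partial_t$, $\|\nabla f\|\equiv 1$, and $\Delta_p f=\textrm{div}(\partial_t)=(n-1)\rho'(t)\geq (n-1)\kappa$. Writing $\pi:\widetilde M\to M$ for the submersion, I would set $\tilde f=f\circ\pi$ and show that it satisfies the hypotheses of Theorem \ref{teo1} with $a=1$ and $b=(n-1)\kappa-\alpha$, which yields
\[
\lambda_{1,p}(\widetilde M)\geq \frac{b^p}{p^p a^{p(p-1)}}=\frac{\big((n-1)\kappa-\alpha\big)^p}{p^p}
\]
after running the estimate on an exhaustion of $\widetilde M$ and passing to the limit. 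Since $\tilde f$ is globally defined with pointwise bounds, its restriction to each exhausting domain is immediate.

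The computation rests on two standard features of Riemannian submersions. First, because $d\pi$ is a linear isometry on horizontal spaces, the gradient of a lift is the horizontal lift of the gradient, $\nabla\tilde f=\overline{\nabla f}$, whence $\|\nabla\tilde f\|=\|\nabla f\|\circ\pi\equiv 1$; in particular $\|\nabla\tilde f\|^{p-2}\nabla\tilde f=\overline{\|\nabla f\|^{p-2}\nabla f}$ is again a basic horizontal field. Second, I would establish the O'Neill-type divergence identity: for a basic horizontal field $\bar X$ projecting to $X$ on $M$,
\[
\widetilde{\textrm{div}}(\bar X)=(\textrm{div}_M X)\circ\pi-g(\bar X,H^{\mathcal F}).
\]
This follows by evaluating $\widetilde{\textrm{div}}(\bar X)=\sum_a g(\widetilde\nabla_{E_a}\bar X,E_a)$ in an adapted orthonormal frame split into horizontal lifts $\{\bar e_j\}$ and a vertical frame $\{V_i\}$: the horizontal sum reproduces $(\textrm{div}_M X)\circ\pi$, while each vertical term, using $g(\bar X,V_i)\equiv 0$, becomes $-g(\bar X,(\widetilde\nabla_{V_i}V_i)^{\mathcal H})$, summing to $-g(\bar X,H^{\mathcal F})$ with $H^{\mathcal F}$ the mean curvature vector of the fibers.

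Combining the two, and using $\|\nabla\tilde f\|\equiv 1$ so that the $p$-weight disappears, I obtain
\[
\widetilde\Delta_p\tilde f=\widetilde{\textrm{div}}(\nabla\tilde f)=(n-1)\rho'(t)\circ\pi-g(\nabla\tilde f,H^{\mathcal F}).
\]
A Cauchy--Schwarz estimate together with $\rho'\geq\kappa$ and $\|H^{\mathcal F}\|\leq\alpha$ then gives $\widetilde\Delta_p\tilde f\geq (n-1)\kappa-\|\nabla\tilde f\|\,\|H^{\mathcal F}\|\geq (n-1)\kappa-\alpha$, which is positive precisely when $\alpha<(n-1)\kappa$. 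Feeding $a=1$ and $b=(n-1)\kappa-\alpha$ into Theorem \ref{teo1} finishes the argument.

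The main obstacle is the divergence identity under the submersion, that is, carrying out the frame computation correctly and identifying the vertical contribution as $-g(\bar X,H^{\mathcal F})$ with the right sign and normalization for $H^{\mathcal F}$. A subordinate point to verify is that the $p$-nonlinearity causes no trouble: one must check that $\|\nabla\tilde f\|^{p-2}\nabla\tilde f$ is the horizontal lift of $\|\nabla f\|^{p-2}\nabla f$, so that the divergence formula applies verbatim; here the fact that $\|\nabla f\|\equiv 1$ makes the weight trivial and renders the reduction clean.
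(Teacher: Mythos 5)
Your proof is correct and follows essentially the same route as the paper: the paper likewise lifts the coordinate (Busemann-type) function $F(s,x)=s$ from the base, uses that $\|\widetilde\nabla\widetilde F\|=1$ so the $p$-Laplacian of the lift reduces to the Laplacian, and combines the submersion identity relating $\widetilde\Delta\widetilde F$ to $\Delta F$ and $H^{\mathcal F}$ (which the paper cites from Lemma 3.2 of \cite{CM}, and which you instead verify directly by the adapted-frame computation) with Cauchy--Schwarz and Theorem \ref{teo1}. Note that the bound you obtain, $\big((n-1)\kappa-\alpha\big)^p/p^p$ under the hypothesis $\alpha<(n-1)\kappa$, is exactly what the paper's own proof in Section \ref{app} yields; the expressions $(n-1)^p\kappa-\alpha$ and $\alpha<(n-1)\kappa^{1/p}$ (and the occurrence of $\lambda_{1,p}(M)$ in place of $\lambda_{1,p}(\widetilde M)$) in the printed statement of Theorem \ref{sub} appear to be misprints.
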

We notice that this is the dual version of similar results for isometric immersions as we can find  
in \cite{BCC, CL, Veeravalli} for $p=2$, and \cite{DuMao, ES, MR4089470}, for  $p>1$.
We point out that in \cite{ES} and in \cite{MR4089470} they used the isoperimetric constant generated
by vector fields presented in \cite{BM}, while in \cite{DuMao} they used the distance function restricted to the submanifold in order to obtain  the estimate.

Recently, Polymerakis in \cite{Polymerakis} established a new estimate for the fundamental  tone
of the Laplace-Beltrami operator of the total space of 
Riemanniann submersions in terms of the spectrum of the base space. 
It is an interesting problem to find a similar estimate for the $p$-Laplacian operator.

This paper is organized as follows. In Section \ref{general} we present the proof of Theorem \ref{teo1}
and some direct consequences. In Section \ref{app} we present the basic material to prove Corollary \ref{c1},
Corollary \ref{cor}, and Theorem \ref{sub}.

\subsection*{Acknowledgments} 
The second author was partially supported by  
Brazilian National Council for Scientific and Technological Development  (CNPq Grant 309733 / 2019-7) and 
Coordenação de Aperfeiçoamento de Pessoal de Nível Superior - Brasil 
(CAPES-COFECUB 88887.143161/ 2017-0 and
CAPES-MATHAMSUD 88887. 368700/ 2019-00).

\section{A general estimate}\label{general}

Given $p>1$, let us consider
the Dirichlet eigenvalue problem associated to the $p$-Laplacian operator on a bounded domain 
$\Omega \subset M$, that is,
\begin{equation}\label{p1}
\begin{cases}
\Delta_p u +\lambda |u|^{p-2}u = 0 \textrm{ in } \Omega, \\
u=0,\textrm { on } \partial \Omega.
\end{cases}
\end{equation}

It is well known that the first eigenvalue of problem (\ref{p1}) can be characterized
variationally as
\[
\lambda_ {1,p}(\Omega) = \inf \bigg\{\frac{\int_\Omega \|\nabla u\|^p \, dM}{\int_\Omega u^p\, dM}:
u\in W^{1,p}_0(\Omega), u\neq 0 \bigg \}.
\]

In this paper, all integrations are made with respect to the volume form given by the Riemannian metric.
For the sake of simplicity, in the sequel we will omit the volume form $dM$ in the integrals.  
Now we restate and prove our fist result below.

%
%
%
%
%
%
%
%
%
%
%
%
%
%
%
%

%

\begin{theorem}
Let $\Omega$ be a bounded domain on a Riemannian manifold $M$, 
and assume that there exist a smooth function
$f:\Omega\to \real$ satisfying that $\|\nabla f\|\leq a$ and $\Delta_p f\geq b$ for some constants $a,b>0$. 
Then the first eigenvalue
of the $p$-Laplacian satisfies 
\[
\lambda_{1,p}(\Omega)\geq \frac{b^p}{p^pa^{p(p-1)}}.
\]
\end{theorem}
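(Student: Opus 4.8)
The plan is to test the pointwise inequality $\Delta_p f \ge b$ against the $p$-th power of an arbitrary admissible function and to integrate by parts, so as to convert the result into the Rayleigh quotient that defines $\lambda_{1,p}(\Omega)$. Fix $u \in C_c^\infty(\Omega)$ with $u \ge 0$; it suffices to treat such $u$, since they are dense in $W^{1,p}_0(\Omega)$ and replacing $u$ by $|u|$ changes neither integral in the Rayleigh quotient. Multiplying $\Delta_p f \ge b$ by $u^p \ge 0$ and integrating gives
\[
b \int_\Omega u^p \le \int_\Omega u^p \, \Delta_p f = \int_\Omega u^p \, \mathrm{div}\big(\|\nabla f\|^{p-2}\nabla f\big).
\]

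Next I would integrate by parts. Because $u$ has compact support there is no boundary term, and the divergence theorem yields
\[
\int_\Omega u^p \, \mathrm{div}\big(\|\nabla f\|^{p-2}\nabla f\big) = -p\int_\Omega u^{p-1}\,\|\nabla f\|^{p-2}\,\langle \nabla u, \nabla f\rangle.
\]
The two hypotheses on $f$ now enter through pointwise bounds: Cauchy--Schwarz gives $\langle \nabla u,\nabla f\rangle \le \|\nabla u\|\,\|\nabla f\|$, and then $\|\nabla f\| \le a$ upgrades $\|\nabla f\|^{p-1}$ to $a^{p-1}$ (here $p>1$ keeps the exponent positive). Combining the last two displays yields
\[
b\int_\Omega u^p \le p\,a^{p-1}\int_\Omega u^{p-1}\|\nabla u\|.
\]

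The final step recovers the Rayleigh quotient via H\"older's inequality with conjugate exponents $p/(p-1)$ and $p$, applied to the product $u^{p-1}\cdot \|\nabla u\|$:
\[
\int_\Omega u^{p-1}\|\nabla u\| \le \Big(\int_\Omega u^p\Big)^{(p-1)/p}\Big(\int_\Omega \|\nabla u\|^p\Big)^{1/p}.
\]
Writing $A = \int_\Omega u^p$ and $B = \int_\Omega \|\nabla u\|^p$, the two previous displays combine to $bA \le p\,a^{p-1}\,A^{(p-1)/p} B^{1/p}$; dividing by $A^{(p-1)/p}$ and raising to the $p$-th power gives $b^p A \le p^p a^{p(p-1)} B$, i.e.\ $B/A \ge b^p/(p^p a^{p(p-1)})$. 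Taking the infimum over all admissible $u$ then produces the stated lower bound for $\lambda_{1,p}(\Omega)$.

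The argument is a chain of standard inequalities, so the only real difficulty is bookkeeping the constants. The main point to get right is the pairing of the multiplier $u^p$ with the H\"older exponents $p/(p-1)$ and $p$: these choices are exactly what generate the sharp constant $p^p a^{p(p-1)}$, and any other pairing would degrade it. A secondary technical issue is the regularity needed to integrate by parts and to pass from smooth compactly supported test functions to all of $W^{1,p}_0(\Omega)$, but this is dispatched by the usual density argument.
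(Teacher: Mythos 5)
Your proof is correct and follows essentially the same route as the paper: test $\Delta_p f \ge b$ against $|u|^p$, integrate by parts, and bound via Cauchy--Schwarz and $\|\nabla f\|\le a$. The only difference is cosmetic: you finish with H\"older's inequality at the integral level and then divide and raise to the $p$-th power, whereas the paper applies a pointwise Young inequality with an optimized parameter $\theta$ and absorbs the $|v|^p$ term; both give the identical constant $b^p/(p^p a^{p(p-1)})$, and your version is, if anything, slightly more direct.
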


\begin{proof}
We first note that by density we can use smooth functions in the variational characterization 
of $\lambda_{1,p}(\Omega)$.  In particular, given $v\in C^\infty_0(\Omega)$ we have

\begin{eqnarray*}
b\int_\Omega |v|^p&\leq &\int_\Omega |v|^p\Delta_p f 
= - \int_\Omega \langle \nabla |v|^p, \|\nabla f\|^{p-2}\nabla f\rangle\\
&=& -p\int_\Omega |v|^{p-1}\langle \nabla |v|,  \|\nabla f\|^{p-2}\nabla f \rangle 
\leq p\int_\Omega |v|^{p-1}\|\nabla v\| \|\nabla f\|^{p-1}\\
&\leq& p\int_\Omega |v|^{p-1}a^{p-1}\|\nabla v\|.
\end{eqnarray*}

Now we wish to apply Young inequality in the last term. To do that, 
let us introduce a constant $\theta>0$ to be chosen latter. So, if $q$ is the conjugate of $p$ we get

\begin{eqnarray*}
|v|^{p-1}a^{p-1}\|\nabla v\| &\leq& \frac{\theta^q|v|^{q(p-1)}}{q} + \frac{a^{p(p-1)}\|\nabla v\|^p}{p \theta^p} \\
&=&  \frac{(p-1)\theta^{p/(p-1)}|v|^{p}}{p} + \frac{a^{p(p-1)}\|\nabla v\|^p}{p\theta^p}.
\end{eqnarray*}
That is,
\[
 p|v|^{p-1}a^{p-1}\|\nabla v\| \leq  (p-1)\theta^{p/(p-1)}|v|^{p} + \frac{a^{p(p-1)}\|\nabla v\|^p}{\theta^p} .
\]

Now, let us choose $\theta$ such that $b-(p-1)\theta^{p/(p-1)} = \frac b p$,
that is, $ \theta ^p = \frac {b^{p-1}}{p^{p-1}}$. Hence we get
\[
\frac b p \int_\Omega |v|^p\leq \frac{p^{p-1}a^{p(p-1)}}{b^{p-1}}\int_\Omega \|\nabla v\|^p.
\]
Rearranging the terms we obtain that
\[
\int_\Omega \|\nabla v\|^p \geq \frac{b^p}{p^pa^{p(p-1)}} \int_\Omega |v|^p,
\]
and this concludes the proof.
\end{proof}

As a first application, we apply Theorem \ref{teo1} in the case of domains in
space forms using the distance function.

Let us denote by $M^n(c)$ the simply connected space form of constant sectional curvature $c$. 
In particular, if $c<0$ we write $c = -\kappa^2$ and $M^n(-\kappa^2)=\mathbb H^n(-\kappa^2)$ is the hyperbolic
space, if $c=0$, $M^n(0)=\real^n$ is the Euclidean space, and if $c=\kappa^2>0$, $M^n(\kappa^2)
=\mathbb S^n(\kappa^2)$ is the sphere of radius $\frac 1 \kappa$

\begin{corollary}\label{cor1}
Let $\Omega$ be a bounded domain in the space form $M^n(c)$.
If $\Omega$ is contained in a geodesic ball $B_R$, then 
\begin{equation*}	
    \begin{array}{lll}
\medskip   \lambda_{1,p}(\Omega)\geq \frac{(n-1)^p (\sqrt{-c})^{p}}{p^p}\coth^p (\sqrt{-c}\, R), & \textrm{if}&c<0, \\ 
\medskip    \lambda_{1,p}(\Omega)\geq\frac{(n-1)^p}{p^pR^p},      &  \textrm{if}&c=0, \\ 
  \lambda_{1,p}(\Omega)\geq  \frac{(n-1)^p (\sqrt c)^p}{p^p}\cot^p (\sqrt c \, R), &  \textrm{if}&c>0.
    \end{array}
\end{equation*}
\end{corollary}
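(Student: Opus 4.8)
The plan is to invoke Theorem~\ref{teo1} with the distance function playing the role of the auxiliary function $f$. Fix the center $o$ of the geodesic ball $B_R \supset \Omega$ and set $f = r := \operatorname{dist}(\cdot, o)$. Two elementary facts drive the whole argument. First, wherever $r$ is smooth one has $\|\nabla r\| \equiv 1$, so we may take $a = 1$ and, crucially, $\Delta_p r = \operatorname{div}(\|\nabla r\|^{p-2}\nabla r) = \operatorname{div}(\nabla r) = \Delta r$; thus for the distance function the $p$-Laplacian collapses to the ordinary Laplacian. Second, in a space form the Laplacian comparison theorem is an \emph{equality}, giving the exact values
\[
\Delta r = (n-1)\sqrt{-c}\,\coth(\sqrt{-c}\,r)\ \ (c<0),\qquad \Delta r = \frac{n-1}{r}\ \ (c=0),\qquad \Delta r = (n-1)\sqrt{c}\,\cot(\sqrt{c}\,r)\ \ (c>0).
\]

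Next I would extract the constant $b$ from the monotonicity of these expressions together with $r \le R$ on $\Omega \subset B_R$. In each regime the relevant function ($\coth$, $t \mapsto 1/t$, and $\cot$) is decreasing on the interval in play, so its minimum over $r \in (0,R]$ occurs at $r = R$. This yields $\Delta_p r = \Delta r \ge b$ with $b = (n-1)\sqrt{-c}\coth(\sqrt{-c}\,R)$ when $c<0$, $b = (n-1)/R$ when $c=0$, and $b = (n-1)\sqrt{c}\cot(\sqrt{c}\,R)$ when $c>0$; in this last case one additionally needs $R < \pi/(2\sqrt{c})$ so that $\cot(\sqrt{c}\,R) > 0$ and $b$ is a genuine positive constant. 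Substituting $a=1$ and these values of $b$ into the conclusion $\lambda_{1,p}(\Omega) \ge b^p/(p^p a^{p(p-1)})$ of Theorem~\ref{teo1} reproduces the three claimed estimates verbatim.

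The one point demanding care, and the step I expect to be the main obstacle, is that $r$ fails to be smooth at the center $o$ (and, in the spherical case, at the antipodal point, which however lies outside $B_R$ as soon as $R < \pi/\sqrt{c}$), whereas Theorem~\ref{teo1} is stated for smooth $f$. If $o \notin \overline{\Omega}$ there is nothing to do, since $r$ is then smooth near $\overline{\Omega}$. When $o \in \Omega$ the singularity is mild: in geodesic normal coordinates $r = |x| + O(|x|^3)$ and $\Delta r$ behaves like $(n-1)/r$, which is integrable near $o$ for $n \ge 2$ and carries no negative singular (Dirac) part. Hence the single integration by parts used in the proof of Theorem~\ref{teo1},
\[
\int_\Omega |v|^p\,\Delta r = -\int_\Omega \langle \nabla |v|^p, \nabla r\rangle,
\]
survives: excising $B_\varepsilon(o)$ and integrating by parts on $\Omega \setminus B_\varepsilon(o)$, the boundary term over $\partial B_\varepsilon(o)$ equals $-\int_{\partial B_\varepsilon(o)} |v|^p$, which tends to $0$ as $\varepsilon \to 0$ since its area is $O(\varepsilon^{n-1})$. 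I would therefore present this excision (equivalently, a smooth approximation of $r$ from below) as the sole technical justification, after which the estimate of Theorem~\ref{teo1} applies directly with $f = r$.
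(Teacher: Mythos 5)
Your proposal is correct and follows essentially the same route as the paper: take $f$ to be the distance function to the center of $B_R$, use $\|\nabla r\|=1$ so that $\Delta_p r=\Delta r=(n-1)f_c'/f_c$, bound this below by its value at $r=R$, and feed $a=1$ and the resulting $b$ into Theorem~\ref{teo1}. Your additional remarks --- the excision argument handling the non-smoothness of $r$ at the center and the implicit restriction $R<\pi/(2\sqrt c)$ needed for $b>0$ in the spherical case --- are welcome refinements of details the paper leaves tacit, but they do not change the argument.
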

\begin{proof}
The metric in $M^n(c)$ in polar coordinates is given by 
$
g = dr^2+f_c^2(r)d\omega^2,
$
where $d\omega^2$ is the standard metric on $\mathbb S^{n-1}$ and 
\begin{equation*}
\displaystyle  f_c(r)=\left\{
\begin{array}{rll}
 \frac{1}{\sqrt{-c}}\sinh({\sqrt{-c}}\, r), & &\textrm{if } c<0, \\
  r,       & & \textrm{if } c=0,\\
 \frac{1}{\sqrt{c}}\sin({\sqrt{c}}\, r),,& & \textrm{if } c>0.
\end{array}
\right.
\end{equation*}
If we denote by $r_c$ is the distance function to the center of $B_R$, we know that 
 $|\nabla r_c|= 1$ and $\Delta_p r_c = \Delta r_c = (n-1) f'_c/f_c$. In particular,
\begin{equation*}
\displaystyle \Delta_p r_c \geq \left\{
\begin{array}{rll}
\medskip (n-1)\sqrt{-c}\coth(\sqrt{-c}\,R), & &\textrm{if } c<0, \\
\medskip (n-1)/ R,       & & \textrm{if } c=0,\\
  \medskip  (n-1) \sqrt c \cot(\sqrt c \, R),& & \textrm{if } c>0,
\end{array}
\right.
\end{equation*}
and the result follows directly from Theorem \ref{teo1}.
\end{proof}

If $c\leq0$,  we know that $M^n(c)$  has empty cut locus and there are geodesic balls $B_R(p)$ 
for any $R>0$. In this case we define the function
\[
L: [0, \infty)\to \big(\frac{(n-1)^p (\sqrt{-c})^{p}}{p^p},\infty\big)
\] 
as $L(R) = \lambda_{1,p}(B_R)$. Note that, from the definition of $\lambda_{1,p}$, $L$ is 
a decreasing function. 
On the other hand, since $L$ is a continuous function (see for instance
\cite{GL}, \cite{MR2164332}), and  from Corollary \ref{cor1} above we obtain 

\begin{proposition} Given any $\lambda >\frac{(n-1)^p}{p^p}$
there exists unique $R_\lambda>0$
such that first eigenvalue of the  Dirichlet problem for the $p$-Laplacian on the
geodesic ball $B_{R_\lambda}\subset \mathbb H^n$
with radius $R_\lambda$  is precisely $\lambda.$
\end{proposition}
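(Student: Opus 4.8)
The plan is to show that $L$ restricts to a continuous, strictly decreasing bijection of $(0,\infty)$ onto the interval $\big(\tfrac{(n-1)^p}{p^p},\infty\big)$; the proposition then follows immediately. Throughout I work in $\mathbb H^n=M^n(-1)$, so that $\sqrt{-c}=1$ and the relevant constant is $\tfrac{(n-1)^p}{p^p}$.

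First I would pin down the two boundary values of $L$. Its limit at infinity is immediate from the very definition of the fundamental tone together with Corollary \ref{c1}: since the balls $B_R$ exhaust $\mathbb H^n$,
\[
\lim_{R\to\infty} L(R)=\lim_{R\to\infty}\lambda_{1,p}(B_R)=\lambda_{1,p}(\mathbb H^n)=\frac{(n-1)^p}{p^p}.
\]
For the behaviour near $0$ I would invoke the lower bound of Corollary \ref{cor1}, namely $L(R)\geq \tfrac{(n-1)^p}{p^p}\coth^p R$. Since $\coth R\to\infty$ as $R\to 0^+$, this forces $\lim_{R\to 0^+}L(R)=+\infty$. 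Combined with monotonicity, these two limits also confirm that the range of $L$ is exactly $\big(\tfrac{(n-1)^p}{p^p},\infty\big)$, the infimum never being attained for finite $R$ because $\coth^p R>1$ throughout.

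With continuity of $L$ taken from \cite{GL, MR2164332} and the two limits in hand, the intermediate value theorem yields, for every $\lambda>\tfrac{(n-1)^p}{p^p}$, at least one radius $R_\lambda$ with $L(R_\lambda)=\lambda$; this settles existence. For uniqueness I need $L$ to be \emph{strictly} decreasing, which is the single point requiring an argument beyond what is quoted. Here I would run a standard domain-monotonicity argument: if $R_1<R_2$ but $L(R_1)=L(R_2)$, then extending the first eigenfunction of $B_{R_1}$ by zero produces a nonnegative minimizer of the Rayleigh quotient on $B_{R_2}$, hence a first eigenfunction there; but a first eigenfunction of $\Delta_p$ is strictly positive in the interior by the strong maximum principle, contradicting the vanishing of the extension on $B_{R_2}\setminus B_{R_1}$.

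I expect this last step — strict monotonicity — to be the only genuine obstacle, since the limits and the cited continuity reduce everything else to the intermediate value theorem. The delicate point is the regularity and positivity theory for the first eigenfunction of the nonlinear operator $\Delta_p$; rather than reprove it I would cite the known $C^{1,\alpha}$ regularity and the Harnack/strong maximum principle for $p$-harmonic type equations, after which the contradiction above is immediate.
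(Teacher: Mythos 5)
Your proposal is correct and follows essentially the same route as the paper: domain monotonicity of $\lambda_{1,p}$, continuity of $L$ cited from \cite{GL,MR2164332}, and the boundary behaviour extracted from Corollary \ref{cor1} (blow-up as $R\to 0^+$) and from the identification $\lambda_{1,p}(\mathbb H^n)=\frac{(n-1)^p}{p^p}$ in Corollary \ref{c1} (limit as $R\to\infty$), combined via the intermediate value theorem. The only difference is that you make explicit the strict monotonicity needed for uniqueness, via the standard zero-extension of the first eigenfunction and the strong maximum principle for $\Delta_p$, a point the paper leaves implicit in its assertion that $L$ is decreasing; this is a genuine and welcome completion of the argument rather than a different method.
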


\section{Applications}\label{app}

\subsection{Negatively curved manifolds} In this subsection we prove Corollary \ref{c1}. 
From the variational characterization of $\lambda_{1,p}(\Omega)$, we  observe that
if $\Omega_1\subset \Omega_2$, then $\lambda_{1,p}(\Omega_1)\geq\lambda_{1,p}(\Omega_2)$.
In particular, we just need to present an estimate for geodesic balls.

Fixed a geodesic ray  $\gamma$ on $M$,  we consider the Busemann function $B$ 
associated  to $\gamma$. More precisely, $B:M\to \real$ is given by
\[
B(x) = \lim_{t\to \infty} [r(x,\gamma(t)) - t],
\] 
where $r(x, \gamma(t))$ denotes the distance between $x$ and $\gamma (t)$
(see \cite{MR3365851} for details).  
Recall that in nonpositive curvature Busemann functions are of class $C^2(M)$ 
(see \cite{MR512919}) and $\|\nabla B\| =1$. Moreover, from the proof of Lemma 2.3 in
\cite{MR3365851} we have $\Delta_pB = \Delta B \geq (n-1 )\kappa\coth R $ on each geodesic ball
of radius $R>0$. Now, the proof of Corollary \ref{c1} follows directly from Theorem \ref{teo1}.

\subsection{A class of warped products} As stated in the Introduction, let us consider 
$M= \real \times N^{n-1}$, where $(N^{n-1},g_0)$ is an arbitrary Riemannian manifold, and $M$ is
endowed with the warped metric $g = dt^2 + e^{2\rho(t)}g_0$. Following \cite{BCC, CM},
we consider $F:M\to \real$, $F(s,x) = s$. This is the Busemann function associated to the
geodesic ray corresponding to the fact $\real$. A direct computation gives $|\nabla F| = 1$ and 
$$\Delta_p F = \Delta F = (n-1)\rho'(t).$$
So, assuming that $\rho'(t)\geq\kappa>0$ we obtain the claim in Corollary \ref{cor}.

\subsection{Riemannian submersions}
Let $\widetilde M^m$ and $M^n$ be Riemannian manifolds with $m\geq n$ and let
$\pi: \widetilde M^m\to M^n$ be a Riemannian submersion, that is, $\pi$ is a surjective map,
its differential $d\pi_p:T_p \widetilde M\to T_x M$ is surjective for all $p\in \widetilde M $ and $x=\pi(p)\in M$,
and preserves the lengths of horizontal vectors (see \cite{oneill}).
The structure of a Riemannian submersion gives a natural notion of vertical and horizontal vector fields.
Given $x\in M$,  we denote by $\mathcal F_x = \pi^{-1}(x)$ the \emph{fiber} of $\pi $ over $x$, which is
a submanifold of dimension $k=m-n$. 
We say that a vector field on $\widetilde M$ is  {\em vertical} if it is always tangent
to fibers, and it is called {\em horizontal} if it is always orthogonal to
fibers. On the other hand, the Riemannian metric on $\widetilde M$ give us the decomposition
of a vector field $X\in T \widetilde M$ into its vertical and horizontal components, say 
$X = X^{\mathcal V}+X^{\mathcal H}$.  Using these notations, the \emph{second fundamental form} of a fiber 
$\alpha ^\mathcal F: T \mathcal F \times T \mathcal F \to T^\perp \mathcal F$ is given by
 \[
 \alpha^\mathcal F(V, V) = (\widetilde \nabla_V V)^{\mathcal H},
 \]
where $\widetilde \nabla$ denotes the Levi-Civita conextion on $\widetilde M$.  
 The \emph{mean curvature} $H^\mathcal F$ of $\mathcal F$ is defined as the trace of the 
 second fundamental form, that is, it is the horizontal vector field given by
 \[
 H^\mathcal F =  \sum_{i=1}^{k}\alpha^\mathcal F(e_i,e_i)
 = \sum_{i=1}^{k}(\widetilde \nabla_{e_i}e_i)^\mathcal H,
 \]
where $\{e_1,\ldots,e_{k}\}$ is a local orthonormal frame to the fiber.
Now, given a smooth function $B$ on $M$ we set $\widetilde B : \widetilde M\to \real$,
$\widetilde B(p) = B\circ \pi (p)$ the lift of $B$ to $\widetilde M$. From Lemma  3.2 in \cite{CM} we
have
\begin{eqnarray}\label{des}
\widetilde \Delta \widetilde  B=\Delta B+\langle \widetilde \nabla \widetilde B, H^\mathcal F\rangle.
\end{eqnarray}

Let us assume the function  $B:M^n\to \real$ is such that $\| \nabla B\|=1$  and 
$\Delta B \geq b>0$. As we saw above, this the case when $K_M\leq \kappa^2<0 $ or when $M$
is given by the warped product in Corollary \ref{cor} if we choose $B$ as a Busemann function. 
Clearly, under these conditions we also have $\Delta_p B >b$. Finally, we note that the
the gradient of $\widetilde B$ is the horizontal 
lifting of the gradient of $B$, in particular $|\widetilde\nabla \widetilde B| = 1$ and  so
$\widetilde \Delta_p \widetilde B=\widetilde \Delta \widetilde B$. 
Therefore, assuming that $\|H^\mathcal F\| \leq \alpha$ we get from \ref{des} that
$
\widetilde \Delta_p \widetilde  B \geq b - \alpha
$
Applying Theorem \ref{teo1} we get
\[
\lambda_{1,p}(\widetilde M) \geq \frac{(b-\alpha)^p}{p^p}.
\]
This proves Theorem \ref{sub}.

\bibliographystyle{amsplain}
\bibliography{biblio}

\providecommand{\bysame}{\leavevmode\hbox to3em{\hrulefill}\thinspace}
\providecommand{\MR}{\relax\ifhmode\unskip\space\fi MR }
\providecommand{\MRhref}[2]{%
  \href{http://www.ams.org/mathscinet-getitem?mr=#1}{#2}
}
\providecommand{\href}[2]{#2}
\begin{thebibliography}{10}

\bibitem{BCS}
M.~Batista, M.~P. Cavalcante, and N.~L. Santos, \emph{The {$p$}-hyperbolicity
  of infinity volume ends and applications}, Geom. Dedicata \textbf{171}
  (2014), 397--406. \MR{3226802}

\bibitem{BCC}
Pierre B\'{e}rard, Philippe Castillon, and Marcos Cavalcante, \emph{Eigenvalue
  estimates for hypersurfaces in {$\Bbb H^m\times \Bbb R$} and applications},
  Pacific J. Math. \textbf{253} (2011), no.~1, 19--35. \MR{2869432}

\bibitem{MR3365851}
G.~Pacelli Bessa, Jorge~H. de~Lira, Stefano Pigola, and Alberto~G. Setti,
  \emph{Curvature estimates for submanifolds immersed into horoballs and
  horocylinders}, J. Math. Anal. Appl. \textbf{431} (2015), no.~2, 1000--1007.
  \MR{3365851}

\bibitem{BM}
G.~Pacelli Bessa and J.~F\'{a}bio Montenegro, \emph{Eigenvalue estimates for
  submanifolds with locally bounded mean curvature}, Ann. Global Anal. Geom.
  \textbf{24} (2003), no.~3, 279--290. \MR{1996771}

\bibitem{CC}
F.~G. Carvalho and M.~P. Cavalcante, \emph{Overdetermined problems for the
  $p$-laplacian operador on unbounded domains}, In prepartion. (2021).

\bibitem{CM}
Marcos~P. Cavalcante and Fernando Manfio, \emph{On the fundamental tone of
  immersions and submersions}, Proc. Amer. Math. Soc. \textbf{146} (2018),
  no.~7, 2963--2971. \MR{3787357}

\bibitem{CL}
Leung-Fu Cheung and Pui-Fai Leung, \emph{Eigenvalue estimates for submanifolds
  with bounded mean curvature in the hyperbolic space}, Math. Z. \textbf{236}
  (2001), no.~3, 525--530. \MR{1821303}

\bibitem{DuMao}
Feng Du and Jing Mao, \emph{Estimates for the first eigenvalue of the drifting
  {L}aplace and the {$p$}-{L}aplace operators on submanifolds with bounded mean
  curvature in the hyperbolic space}, J. Math. Anal. Appl. \textbf{456} (2017),
  no.~2, 787--795. \MR{3688451}

\bibitem{EM}
Jos\'{e}~M. Espinar and Jing Mao, \emph{Extremal domains on {H}adamard
  manifolds}, J. Differential Equations \textbf{265} (2018), no.~6, 2671--2707.
  \MR{3804728}

\bibitem{ES}
Israel Evangelista and Keomkyo Seo, \emph{{$p$}-fundamental tone estimates of
  submanifolds with bounded mean curvature}, Ann. Global Anal. Geom.
  \textbf{52} (2017), no.~3, 269--287. \MR{3711601}

\bibitem{AP}
J.~P. Garc\'{\i}a~Azorero and I.~Peral~Alonso, \emph{Existence and
  nonuniqueness for the {$p$}-{L}aplacian: nonlinear eigenvalues}, Comm.
  Partial Differential Equations \textbf{12} (1987), no.~12, 1389--1430.
  \MR{912211}

\bibitem{GL}
Jorge Garc\'{\i}a~Meli\'{a}n and Jos\'{e} Sabina~de Lis, \emph{On the
  perturbation of eigenvalues for the {$p$}-{L}aplacian}, C. R. Acad. Sci.
  Paris S\'{e}r. I Math. \textbf{332} (2001), no.~10, 893--898. \MR{1838765}

\bibitem{G}
Alexander Grigor'yan, \emph{Analytic and geometric background of recurrence and
  non-explosion of the {B}rownian motion on {R}iemannian manifolds}, Bull.
  Amer. Math. Soc. (N.S.) \textbf{36} (1999), no.~2, 135--249. \MR{1659871}

\bibitem{MR512919}
Ernst Heintze and Hans-Christoph Im~Hof, \emph{Geometry of horospheres}, J.
  Differential Geometry \textbf{12} (1977), no.~4, 481--491 (1978). \MR{512919}

\bibitem{MR1312235}
Yin~Xi Huang, \emph{Existence of positive solutions for a class of the
  {$p$}-{L}aplace equations}, J. Austral. Math. Soc. Ser. B \textbf{36} (1994),
  no.~2, 249--264. \MR{1312235}

\bibitem{Kristaly}
Alexandru Kristály, \emph{New features of the first eigenvalue on negatively
  curved spaces}, Adv. Calc. Var. \textbf{to appear} (2021).

\bibitem{MR2164332}
Idrissa Ly, \emph{The first eigenvalue for the {$p$}-{L}aplacian operator},
  JIPAM. J. Inequal. Pure Appl. Math. \textbf{6} (2005), no.~3, Article 91, 12.
  \MR{2164332}

\bibitem{McKean}
H.~P. McKean, \emph{An upper bound to the spectrum of {$\Delta $} on a manifold
  of negative curvature}, J. Differential Geometry \textbf{4} (1970), 359--366.
  \MR{266100}

\bibitem{oneill}
Barrett O'Neill, \emph{The fundamental equations of a submersion}, Michigan
  Math. J. \textbf{13} (1966), 459--469. \MR{200865}

\bibitem{Poliquin}
Guillaume Poliquin, \emph{Bounds on the principal frequency of the
  {$p$}-{L}aplacian}, Geometric and spectral analysis, Contemp. Math., vol.
  630, Amer. Math. Soc., Providence, RI, 2014, pp.~349--366. \MR{3328551}

\bibitem{Polymerakis}
Panagiotis Polymerakis, \emph{Spectral estimates and discreteness of spectra
  under {R}iemannian submersions}, Ann. Global Anal. Geom. \textbf{57} (2020),
  no.~2, 349--363. \MR{4070167}

\bibitem{MR2480663}
Alessandro Savo, \emph{On the lowest eigenvalue of the {H}odge {L}aplacian on
  compact, negatively curved domains}, Ann. Global Anal. Geom. \textbf{35}
  (2009), no.~1, 39--62. \MR{2480663}

\bibitem{SY}
R.~Schoen and S.-T. Yau, \emph{Lectures on differential geometry}, Conference
  Proceedings and Lecture Notes in Geometry and Topology, I, International
  Press, Cambridge, MA, 1994, Lecture notes prepared by Wei Yue Ding, Kung
  Ching Chang [Gong Qing Zhang], Jia Qing Zhong and Yi Chao Xu, Translated from
  the Chinese by Ding and S. Y. Cheng, With a preface translated from the
  Chinese by Kaising Tso. \MR{1333601}

\bibitem{MR4089470}
He-Jun Sun, Chengyue Han, and Lingzhong Zeng, \emph{Lower bounds for the first
  eigenvalues of the {$p$}-{L}aplacian and the weighted {$p$}-{L}aplacian},
  Math. Inequal. Appl. \textbf{23} (2020), no.~2, 585--596. \MR{4089470}

\bibitem{Veeravalli}
Alain~R. Veeravalli, \emph{Une remarque sur l'in\'{e}galit\'{e} de {M}c{K}ean},
  Comment. Math. Helv. \textbf{78} (2003), no.~4, 884--888. \MR{2016701}

\end{thebibliography}
\end{document}